\documentclass  [a4paper, 11pts] {article}
\usepackage [utf8] {luainputenc}
\usepackage [T1] {fontenc}
\usepackage{csquotes}
\usepackage {lmodern}
\usepackage [english] {babel}
\usepackage{hyperref} 

\usepackage {amsmath}
\usepackage{xypic}
\usepackage {amssymb}
\usepackage{mathabx}

\usepackage{amsthm}
\usepackage{MnSymbol}
\usepackage{pifont}

\usepackage {calrsfs}
\usepackage {stmaryrd}
\usepackage{fancyhdr}
\usepackage[top = 3cm, bottom = 3cm, left = 3cm, right = 3cm]{geometry}

\usepackage[
backend=biber,
style=alphabetic,
sorting=ynt
]{biblatex}
\usepackage{graphicx}

\newtheorem*{thm*}{Theorem}
\newtheorem*{pro*}{Proposition}
\newtheorem*{lem*}{Lemma}
\newtheorem*{Def*}{Definition}
\newtheorem*{exm*}{Example}
\newtheorem*{cor*}{Corollary}
\newtheorem*{req*}{Remark}
\newtheorem*{con*}{Conjecture}

\newtheorem{thm}{Theorem}[section]
\newtheorem{pro}[thm]{Proposition}
\newtheorem{lem}[thm]{Lemma}

\newtheorem{exm}[thm]{Example}

\newtheorem{req}[thm]{Remark}

\newcommand{\R}{\mathbb{R}}

\newcommand{\Z}{\mathbb{Z}}

\newcommand{\eps}{\varepsilon}
\newcommand{\Out}{{\mathrm{Out}}}
\newcommand{\Aut}{{\mathrm{Aut}}}
\newcommand{\Min}{{\mathrm{Min}}}

\title{Which $F_3$-by-$\Z$s are CAT(0)?}
\author{Leo \textsc{Delage}}
\date{}

\begin{document}

\maketitle

\begin{abstract}

In this note we point out a mistake in theorem 4.4 of \cite{Sam06}, which states that a semidirect product $F_3\rtimes_\phi\Z$ whose defining automorphism $\phi$ is unipotent-polynomially-growing and fixes a free factor of rank $2$ is a CAT(0) group. We give and prove the corrected statement: such a group is CAT(0), if and only if $\phi$ is the identity or if the element of $F_2$ twisting the non-fixed generator is not in the commutator subgroup of $F_2$. This gives new examples of free-by-cyclic groups that cannot act properly by semisimple isometries on a CAT(0) space, that are similar to {Gersten}'s examples \cite{Ger94}.

We also construct CAT(0) structures for new examples of $F_3$-by-$\Z$s by thickening the strips in {Bridson}'s tree of spaces construction \cite{BH99}.

\end{abstract}

\section{Introduction} ~

A \emph{free-by-cyclic group} is a semidirect product $F_n\rtimes_\phi\Z$ of a free group $F_n$ of rank $n\ge 1$ with $\Z$. It is determined by a free group automorphism $\phi\in \Aut(F_n)$, and sometimes called the \emph{suspension} of $\phi$. It is presumably a hard open problem to determine which free group automorphisms have a CAT(0) suspension, despite the great amount of knowledge available for classifying free group automorphisms.

Significant subclasses of free group automorphisms are known to have CAT(0) suspensions.
All exponentially-growing automorphisms that do not act periodically on a nontrivial conjugacy class define word-hyperbolic free-by-cyclic groups \cite{BF92, Bri00}, and these are cocompactly cubulated thanks to the boundary separation criterion in \cite{BW12, HW15}. Geometric iwips, i.e. automorphisms obtained from a pseudo-Anosov homeomorphism of a surface with boundary, give rise to fundamental groups of compact hyperbolic $3$-manifolds with boundary \cite{Ota96} which are known to be CAT(0). All $F_2$-by-$\Z$s act geometrically on a CAT(0) square complex \cite{BK16}.

Most known examples of free-by-cyclic groups that are not CAT(0) are similar to Gersten's example \cite{Ger94}: this is a linearly-growing automorphism $\phi\in \Aut(F_3)$ given as $\phi(a) = a$, $\phi(b) = ba$, $\phi(c) = ca^2$. The first equation implies that the automorphism letter $t$ and the generator $a$ span a $\Z^2$ that should act on an embedded plane as described in the flat torus theorem. The other two equations imply that $t$, $ta=at$ and $ta^2=a^2t$ are all conjugate and should therefore translate by the same length in that plane. So for a point $p$ in the plane, $t\cdot p, (at)\cdot p, (a^2t)\cdot p$ all lie on a circle of the same radius. This is impossible since those same three points also lie on a common translation axis for $a$ giving a circle and a line intersecting in three points in that plane. In fact, this obstruction prevents Gersten's group $F_3\rtimes_\phi\Z$ from even acting properly by semisimple isometries on a CAT(0) space.

We slightly extend the list of non-CAT(0) examples by generalizing Gersten's obstruction as follows.

\begin{thm}
\label{NonCAT0}
    Let $\phi\in \Aut(F_3)$ be of one of the following types:
    \begin{enumerate}
        \item $\begin{array}{rccl} \phi: &
            a & \mapsto & a \\
            & b & \mapsto & b \\
            & c & \mapsto & cw(a,b)
            \end{array}$
            
            with $w\in [F_2, F_2]$, $w\ne 1$

        \item $\begin{array}{rccl} \phi: &
            a & \mapsto & a \\
            & b & \mapsto & ba^k \\
            & c & \mapsto & cw(a,b)
            \end{array}$
            
            with $k > 0$ and $w(a,b)\in F_2$ cyclically reduced, $b$-balanced, of $b$-height $h(w) = 1$, and totally balanced (see \ref{UPG}).
    \end{enumerate}
    Then $F_3\rtimes_\phi\Z$ does not act properly by semisimple isometries on a CAT(0) space; in particular, it is not a CAT(0) group.
\end{thm}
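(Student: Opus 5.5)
We argue by contradiction. Suppose $G=F_3\rtimes_\phi\Z=\langle a,b,c,t\rangle$ acts properly by semisimple isometries on a CAT(0) space $X$; we derive a Gersten-type obstruction from the flat torus theorem. Write $\phi(c)=cw$, so in $G$ we have $c^{-1}tc=t\,\phi(c)^{-1}c$ up to the convention for the semidirect product. Thus $t$ is conjugate to $tw^{\pm1}$, and more generally to $t\,\phi^{n}(\cdot)$-twisted versions. Conjugation by $c^m$ gives that $t$ is conjugate to $t\,w_m$ for suitable words $w_m$. The plan is to find a subgroup $H\cong\Z^2$ or a free abelian-times-free subgroup containing $t$ and these $tw_m$. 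In it, translation lengths on a common flat are forced to be equal, and a convexity argument then yields a contradiction.

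\textbf{Case 1.} Here $a,b$ commute with $t$, so $H=\langle a,b,t\rangle\cong F_2\times\Z$. By the flat torus theorem and its product decomposition, $\Min(t)$ splits as $Y\times\R$ with $H$ preserving it. $F_2$ acts on $Y$ by semisimple isometries (semisimplicity passes to the factor), and $t$ translates the $\R$ factor by $|t|$. For $g\in F_2$, the translation length of $tg$ is $\sqrt{|t|^2+|g|_Y^2}$. Since $tw$ is conjugate to $t$, we get $|w|_Y=0$, so $w$ acts elliptically on $Y$. Iterating with $c^m$, the element $tw^m$ is conjugate to $t$, which is consistent, so elliptic alone is not a contradiction. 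The key extra input is that $w\in[F_2,F_2]$ with $w\neq1$: the action of $F_2$ on $Y$ must be proper modulo the $\R$ direction, since $H$ acts properly on $X$. Hence an element with $|w|_Y=0$ fixes a point of $Y$. Then $\langle w\rangle$ fixes a point of $Y\times\R$ up to translation, so $w$ has a fixed point in $\Min(t)$ lines, and the subgroup $\langle w,t\rangle\cong\Z^2$ with $w$ elliptic contradicts properness, since $w$ has infinite order. So in Case 1 the contradiction comes straight from properness once $|w|_Y=0$. I should double-check why the hypothesis $w\in[F_2,F_2]$ is needed. It is needed because if $w$ had nonzero abelianization the element $tw$ would not be conjugate to $t$; the relation $c^{-1}tc=tw$ only holds with the correct normalization. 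Indeed, conjugacy of $t$ and $tw$ in $G$ is automatic, and the condition on $w$ presumably enters via the corrected CAT(0) construction for the converse. I will therefore verify the conjugacy relation carefully first.

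\textbf{Case 2.} Here $a$ commutes with $t$, giving $\Z^2=\langle a,t\rangle$ and a flat $F$ by the flat torus theorem. The $b$-balanced, height-$1$, totally balanced hypotheses (from \ref{UPG}) should let us rewrite $w$ as a product in which conjugating $t$ by $c$ and by suitable powers of $b$ lands in the coset $t\langle a\rangle$. Explicitly, $b^{-1}tb=ta^{\pm k}$, and a height-$1$ balanced word $w$ should be expressible as a product of $b$-conjugates of powers of $a$ whose total exponent behaves like $a^{j}$. The aim is to produce three distinct elements $t,\ ta^{p},\ ta^{q}$ with $0,p,q$ distinct, all conjugate in $G$. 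Then Gersten's argument applies verbatim in $F$: the three points $t\cdot x,\ ta^p\cdot x,\ ta^q\cdot x$ lie on an axis of $a$ and on a circle of radius $|t|$ about $x$. This is impossible, since a line meets a circle in at most two points.

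The main obstacle is this combinatorial step in Case 2: translating the balance conditions into an explicit conjugacy $t\sim ta^{p}$ with $p\neq 0,\pm k$. I would first handle the simplest example, $w=ba^{j}b^{-1}a^{-j}$-type words, and then induct on the length of $w$ using the $b$-height decomposition.
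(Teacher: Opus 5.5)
There are two genuine problems, one in each case.

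\textbf{Case 1.} Your formula $\|gt\|=\sqrt{\|t\|^2+\|g_Y\|^2}$ for $g\in F_2$ is false in general. An element commuting with $t$ acts on $\Min(t)=Y\times\R$ as $g_Y$ times a translation of $\R$ by some amount $g_\R$, so $\|gt\|^2=\|g_Y\|^2+(\|t\|+g_\R)^2$, and $g_\R$ need not vanish. This is exactly where the hypothesis $w\in[F_2,F_2]$ is used: $g\mapsto g_\R$ is a homomorphism $F_2\to\R$, hence $w_\R=0$.

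Your closing discussion concludes that the hypothesis matters only for the converse construction, since $t\sim wt$ holds for every $w$. That conclusion is wrong. As written, your argument would equally show that $\phi(c)=ca$ gives a group that is not CAT(0), contradicting the first half of Proposition~\ref{InvF2}. Once $w_\R=0$ is inserted, your Case 1 is the paper's argument: $\|wt\|=\|t\|$ forces $w_Y$ to be elliptic, so $w$ fixes a point of $X$. That is impossible for a nontrivial element of a torsion-free group acting properly. The paper says the same thing as $\|wt\|=\sqrt{\|w\|^2+\|t\|^2}>\|t\|$.

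\textbf{Case 2.} Your strategy cannot succeed. In general no $q\notin\{0,k\}$ makes $a^qt$ conjugate to $t$, so there is no Gersten triple in the flat of $\langle a,t\rangle$. Take for instance $w=[a,bab^{-1}]$, which satisfies all the hypotheses.
\begin{itemize}
\item View $H=\langle a,b,t\rangle$ as an HNN extension of $\langle a,t\rangle$. In its Bass--Serre tree, the edges at the base vertex have stabilizer $\langle t\rangle$ or $\langle a^kt\rangle$. So for $q\notin\{0,k\}$ the element $a^qt$ fixes only that vertex and is not $H$-conjugate to $t$.
\item View $G$ as the HNN extension of $H$ with associated subgroups $\langle t\rangle$ and $\langle wt\rangle$. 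An element of $H$ that is $G$-conjugate but not $H$-conjugate to $t$ must be $H$-conjugate to $wt$. But $wt$ is hyperbolic in the tree of $H$ by Britton's lemma, whereas $a^qt$ is elliptic.
\end{itemize}

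The missing idea is that the obstruction lives in $\Min(t)$, exactly as in Case 1. The elements $\alpha=a$ and $\beta=bab^{-1}$ are fixed by $\phi$, so they commute with $t$ and generate a free group of rank $2$. Height $1$ plus total balance says precisely that $w$, read from its bottom level, equals $\alpha^{m_1}\beta^{n_1}\cdots\alpha^{m_r}\beta^{n_r}$ with $\sum m_i=\sum n_i=0$. Thus $w\in[\langle\alpha,\beta\rangle,\langle\alpha,\beta\rangle]$, and the corrected Case 1 argument applies with $\langle\alpha,\beta\rangle$ in place of $F_2$.

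One caution when you carry this out. If $w$ does not start on its bottom level, e.g.\ $w=b^{-1}[\alpha,\beta]b$, only the conjugate $bwb^{-1}$ lies in $\langle\alpha,\beta\rangle$. Since $b$ does not commute with $t$, one gets $b(wt)b^{-1}=(bwb^{-1})\beta^{-k}t$ rather than $(bwb^{-1})t$. So that case is not settled simply by passing to a cyclic permutation.
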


So far, all obstructions to being CAT(0) for free-by-cyclic groups come from polynomially-growing behaviors. Of course, one can also write exponentially-growing automorphisms whose restriction to some polynomially-growing subgroup has the obstruction. It is therefore relevant to ask when a polynomially-growing automorphism defines a CAT(0) free-by-cyclic group. By twisting Bridson's tree of space construction (\cite{BH99}, II.11), Samuelson constructed CAT(0) $2$-complexes for many polynomially-growing automorphisms \cite{Sam06}, and Lyman used a similar construction for symmetric and palindromic automorphisms \cite{Lym23}.

Using a tree of different spaces, we construct CAT(0) $3$-complexes for new examples with polynomially-growing defining automorphism $\phi\in \Aut(F_3)$.

\begin{thm}
\label{YesCAT0}
    Let $\phi\in \Aut(F_3)$ be of one of the following types:
    \begin{enumerate}
        \item $\begin{array}{rccl} \phi: &
            a & \mapsto & a \\
            & b & \mapsto & b \\
            & c & \mapsto & cw(a,b)
            \end{array}$
            
            with $w\notin [F_2, F_2]$ or $w=1$

        \item $\begin{array}{rccl} \phi: &
            a & \mapsto & a \\
            & b & \mapsto & ba^k \\
            & c & \mapsto & cw(a,b)
            \end{array}$
            
            with $k > 0$ and $w(a,b)\in F_2$ cyclically reduced, $b$-balanced, of $b$-height $h(w) > 0$, and satisfying either $\displaystyle \left(\sum_{j=0}^{h(w)} N_{a,j}(w)\right)^2 < k\sum_{j=0}^{h(w)} (1 + 2(j_0-j))N_{a,j}(w)$ where $j_0\in \{0, \ldots, h(w)\}$ is the starting level of $w$, or $\displaystyle 0 < \sum_{j=0}^{h(w)} (-1)^{j_0-j}N_{a,j}(w) < k$.
 (see \ref{UPG} for notations).
    \end{enumerate}
    Then $F_3\rtimes_\phi \Z$ is a CAT(0) group.
\end{thm}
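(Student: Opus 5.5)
We will build, in each case, a compact CAT(0) piecewise-Euclidean complex $X$ with $\pi_1(X)\cong F_3\rtimes_\phi\Z$ and universal cover $\widetilde X$ CAT(0). It will be a graph of spaces modelled on Bridson's tree of spaces construction (\cite{BH99}, II.11), but with the strips replaced by thicker $3$-dimensional pieces. The splitting we use is $G=F_3\rtimes_\phi\Z = \langle F_2, t\rangle \ast_{\langle w', t'\rangle}$, an HNN extension along $c$. Here $H=F_2\rtimes_{\phi|}\Z=\langle a,b,t\rangle$ is the vertex group, and $c$ conjugates $t$ to $c t c^{-1}$. Since $c^{-1}tc = t\,\phi(c)^{-1}c\ldots$, the relation reads $t^{c}=w^{\pm1}t$ up to conventions. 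So the edge group is the cyclic group $\langle t\rangle$, identified with $\langle wt\rangle$ in $H$.

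\textbf{Vertex space.} The vertex space $Y$ will be a CAT(0) complex for $H$. In case 1 we take $Y=R_2\times S^1$, a rose times a circle, where the petals $a,b$ get lengths $\ell_a,\ell_b$ and the circle gets length $\tau$. In case 2 we take a Bridson/Samuelson-type $2$-complex for $F_2\rtimes\Z$ with $b\mapsto ba^k$: a torus for $\langle a,t\rangle$ with a strip glued along the $b$-edge, skewed so that $b$ goes from $t$ to $a^kt$.

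\textbf{Translation lengths.} The key necessary condition is that $\langle t\rangle$ and $\langle wt\rangle$ have equal translation lengths in $Y$, and that both act by axes which are closed geodesics. In case 1 the length of $wt$ is $\sqrt{\ell(w)^2+\tau^2}$, where $\ell(w)$ is the translation length of $w$ in the tree, and this is never equal to $\tau$ when $w\neq 1$. This is why we thicken. Instead of gluing a cylinder $S^1\times[0,1]$ between the two closed geodesics, we glue a $3$-dimensional piece: a flat "twisted" region, or equivalently we change the metric on $Y$. Concretely, for $w\notin[F_2,F_2]$ we use abelianisation. Choose a homomorphism $\lambda:F_2\to\R$ with $\lambda(w)\ne0$, and replace $Y$ by the metric in which $t$ is skewed: $Y=\widetilde R_2\times\R$ with $t$ acting as translation by $(\text{shift},\tau)$ and $a,b$ acting with vertical components $\lambda(a),\lambda(b)$. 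This is a product of a tree with a line, on which $H$ acts by product isometries. Choosing $\lambda(w)=-2\tau'$ appropriately, where $t$ has vertical displacement $\tau'$ and $wt$ has vertical displacement $\tau'+\lambda(w)=-\tau'$, makes the translation lengths of $t$ and $wt$ agree provided $w$ is elliptic in the tree factor. To arrange that ellipticity we instead thicken: we take the edge space to be $(\text{axis})\times[0,1]$ inside a $3$-dimensional product $T\times\R\times[0,1]$ region, so that the gluing maps are isometries of flat strips of equal width. The condition $w\notin[F_2,F_2]$ is exactly what permits $\lambda(w)\neq0$. For $w=1$ the group is $F_2\times\Z$-type and is trivially CAT(0).

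\textbf{Case 2.} The same strategy applies, but the translation length of $wt$ in the $2$-complex for $H$ has to be computed level by level. The axis of $wt$ crosses the $b$-strips, and the $a$-letters at $b$-height $j$ contribute horizontal displacements shifted by $k(j_0-j)$. Summing these gives a quadratic expression in the $N_{a,j}(w)$. One inequality in the hypothesis is exactly the condition that a parameter choice (strip skew, torus shape) exists making $|wt|=|t|$ with a thickened strip of positive width. The alternating-sum condition corresponds to a different parameter regime, in which the $b$-strips are flipped.

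Finally, we check the link condition at the vertices of the $3$-complex using Gromov's criterion for piecewise-Euclidean complexes. Equivalently, by \cite{BH99} II.11, we check that the edge spaces are convex, that the gluing maps are isometries onto convex subspaces, and then apply the gluing theorem for trees of CAT(0) spaces. The main obstacle is the translation-length computation in case 2, and showing that the stated inequalities are precisely the feasibility conditions for the metric parameters.
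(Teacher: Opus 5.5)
Your proposal has genuine gaps in both cases.

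\textbf{Case 1.} The set-up itself is sound: $F_2\times\langle t\rangle$ acts geometrically on $T\times\R$ via $g\mapsto(g_T,\lambda(g))$ and $t\mapsto(\mathrm{id},\tau)$. In this action $\|wt\|^2=\ell(w)^2+(\lambda(w)+\tau)^2$. The failure comes next. You fix $\lambda(w)=-2\tau$, say this works ``provided $w$ is elliptic in the tree factor'', and then propose to thicken the edge space to arrange that ellipticity. That step cannot work. Since $w\neq1$ acts freely, hence hyperbolically, on $T$, it is never elliptic there. Moreover, changing the edge space of the splitting along $c$ does not affect how $w\in H$ acts on the vertex space, so the undefined ``$T\times\R\times[0,1]$ region'' achieves nothing.

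What you missed is that ellipticity is unnecessary. Choose $\lambda$ with $\lambda(w)<0$ and set $\tau=\bigl(\ell(w)^2+\lambda(w)^2\bigr)/\bigl(2|\lambda(w)|\bigr)$. Then $\|wt\|=\tau=\|t\|$, and a single ordinary strip finishes the proof by \cite{BH99}, II.11. With that fix your route is a valid and shorter alternative to the paper's. The paper instead works in the slab space $X_0(\varphi,\|a\|,D,\eps)$, uses lemma \ref{Flatk0} to get $\|wt\|<1=\|t\|$ for small $\eps$, and then raises $\eps$ using lemma \ref{Cont}.

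\textbf{Case 2.} Here there is no actual argument. The claim that the hypotheses are ``exactly'' the feasibility conditions is asserted, not derived; the paper only proves that they are sufficient. Your mechanism, in which $a$-letters at level $j$ contribute horizontal shifts by $k(j_0-j)$, reads as a shear/twist picture. For $b$-balanced $w$ such twists cancel, which is precisely why the paper abandons Samuelson's twist. Also, in a torus-plus-strip complex whose strips are glued only along the axes of $t$ and $ta^k$, once an $a$ has moved the base point off the axis the next $b$ acts by no simple rule. A level-by-level sum then has no meaning.

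The missing idea is the paper's thickening. Use slabs $\R\times[-D,D]\times[0,\eps]$ with $D\ge 2N$, glued by the identity on one face and by the rotation $\rho$ of angle $\theta$ on the other, where $\theta$ is the angle between the axes of $t$ and $ta^k$. Then near the origin $b^{\pm1}$ acts, up to an error $\eps$, as $\rho^{\mp1}$. Lemma \ref{FlatkPos} places $wt\tilde x_0$ within $N\eps$ of $\sum_jN_{a,j}(w)\rho^{j_0-j}\vec a$. Hence $\|wt\|<1$ for small $\eps$ whenever
$\delta(\theta)=\bigl|1+\sum_jN_{a,j}(w)\,ie^{i(j_0-j+\frac12)\theta}\tfrac{2}{k}\sin\tfrac{\theta}{2}\bigr|^2<1$.
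The first hypothesis is the negativity of the $\theta^2$-coefficient of $\delta$ as $\theta\to0$. The second is $\lim_{\theta\to\pi}\delta<1$, where the alternating signs come from $\rho\to-1$.

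You also omit the final step. Reaching $\|wt\|=\|t\|$ exactly needs the continuity of $\eps\mapsto\|wt\|$ (lemma \ref{Cont}), together with $\|wt\|\to\infty$ as $\eps\to\infty$, which uses that $b$ occurs in $w$.
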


The first case consists of those automorphism fixing a free factor of rank $2$ that were left by theorem \ref{NonCAT0}. It seems necessary to reprove that they are CAT(0) since Samuelson's theorem 4.4 \cite{Sam06} had a mistake in its statement and proof. Indeed, it asserts that even those automorphisms as in the first case in theorem \ref{NonCAT0} have CAT(0) suspension, which we prove here is not true. The mistake in the proof can be found at the very beginning of page 8: the statement "decreasing $r$ does not increase this distance" is not true when $\theta > \frac{\pi}{2}$, which needs to be the case if one wants the "distance between corners" to be less than $1$.

The second case consists of automorphisms that do twist both letters $b$ and $c$ nontrivially, with additional conditions on the word $w(a, b)\in F_2$ twisting $c$. Since Samuelson's theorem already deals with the case of a word $w$ having a nonzero sum of powers of $b$, we restrict our attention to those words where this sum is zero (we call them "$b$-balanced"). The first inequality will guarantee that the letters $a$ move some point in the right way and shorten the displacement of the automorphism letter $t$, as some angular parameter $\theta$ tends to $0$. The second inequality gives additional cases that work when $\theta \longrightarrow \pi$.


Even with these new examples, a number of polynomially-growing automorphisms $\phi\in \Aut(F_3)$ are not known to be CAT(0) or not. It is quite possible that more CAT(0) examples can be constructed through combination theorems with different splittings or semi-splittings, or maybe with yet other variations of Bridson's construction. Completely solving the problem seems hard because it requires dealing with finite index phenomena, but there may be some hope if one restricts to triangular (or, more generally, UPG) automorphisms (see \ref{UPG}).

Most of the examples provided by theorem \ref{YesCAT0} do not satisfy the "rich linearity" condition from \cite{MP25} that prevents a free-by-cyclic group from acting geometrically on a CAT(0) cube complex, therefore it may be relevant to ask if they are cocompactly cubulated.

In section \ref{Prel}, we begin by introducing notation and terminology to describe our automorphisms of interest (\ref{UPG}). We then review the basic results of CAT(0) geometry that we will use (\ref{CAT0}). Finally, we describe the tree of spaces construction that we will use for the CAT(0) examples and we state preliminary lemmas about them (\ref{Thick}). In the next section (\ref{F2}), we prove the first parts of theorems \ref{NonCAT0} and \ref{YesCAT0}, thereby correcting the mistake in theorem 4.4 of Samuelson's paper \cite{Sam06}. Finally, we prove the second  parts of both theorems in section \ref{NoF2} (theorem \ref{CAT0ex} and example \ref{nonCAT0ex}).

I would like to thank Kim Ruane, Peter Samuelson, Hikaru Jitsukawa, Corey Bregman, and Samuel Tapie for their interest in these results.

\section{Preliminaries} ~
\label{Prel}

\subsection{Triangular automorphisms} ~
\label{UPG}

Let $n\ge 2$. An automorphism $\phi$ of $F_n = F_{\{x_1,\ldots,x_n\}}$ is \emph{triangular} if it is of the form
\[\begin{array}{lcl}
    \phi(x_1) & = & x_1 \\
    \phi(x_2) & = & x_2 w_2(x_1) \\
    & \vdots & \\
    \phi(x_n) & = & x_n w_n(x_1,\ldots,x_{n-1})
\end{array}\]
where $w_i(x_1,\ldots,x_{i-1})\in F_{i-1} = F_{\{x_1,\ldots,x_{i-1}\}}$ (here we call them the \emph{twisting words} of $\phi$). Triangular automorphisms are prototypical examples of \emph{unipotent polynomially-growing (UPG) automorphisms}, i.e. polynomially-growing automorphisms that induce a unipotent matrix of $GL_n(\Z)$ in the abelianization $\Z^n$ of $F_n$. Using (improved relative) train-track theory, it was proven that any polynomially-growing outer automorphism $[\phi]\in \Out(F_n)$ has a nontrivial power $[\phi]^n$ that admits a representative $\widetilde{\phi^n}\in \Aut(F_n)$ that is conjugate to an unipotent-polynomially-growing automorphism $\psi$ \cite{BFH00}. Consequently, the suspensions $F_n\rtimes_{\widetilde{\phi^n}}\Z$ and $F_n\rtimes_{\psi}\Z$ are isomorphic and are finite index subgroups in $F_n\rtimes_\phi \Z$.

Let $\phi\in \Aut(F_n)$ be triangular with twisting words $w_2,\ldots, w_n$. The free-by-cyclic group $F_n\rtimes_\phi\Z$ is presented as
\[F_n\rtimes_\phi \Z = \left\langle x_1,\ldots,x_n, t \left| \begin{array}{rcl}
tx_1t^{-1} & = & x_1 \\
tx_2t^{-1} & = & x_2w_2(x_1) \\
& \vdots & \\
tx_nt^{-1} & = & x_nw_n(x_1,\ldots,x_n)
\end{array}\right .\right\rangle\]
which can be rewritten as an iterated HNN extension
\[F_n\rtimes_\phi \Z = \left\langle x_1,\ldots,x_n, t \left| \begin{array}{rcl}
[t, x_1] & = & 1 \\
x_2^{-1}tx_2 & = & w_2(x_1)t \\
& \vdots & \\
x_n^{-1}tx_n & = & w_n(x_1,\ldots,x_n)t
\end{array}\right .\right\rangle.\]

We focus on the case $n=3$ and use for notation $(a, b, c) = (x_1, x_2, x_3)$. In that case, a triangular automorphism is determined up to conjugacy by $w_1 = a^k$, $k\in \Z$, and $w_2\in F_2 = F_{\{a, b\}}$. Replacing $\phi$ with a conjugate by the automorphism $a\mapsto a^{-1}$, $b\mapsto b$, $c\mapsto c$ if necessary, we may assume $k\ge 0$ without changing $F_3\rtimes_\phi\Z$ up to isomorphism. In first approach, we assume that $w(a,b)$ is cyclically reduced.

For $w = w(a, b)\in F_{\{a, b\}}$, we say that $w$ is \emph{$b$-balanced} if the sum of the powers of $b$ in $w(a, b)$ equals zero.

Assume $w$ is cyclically reduced and $b$-balanced. The \emph{$b$-height} of $w$, denoted $h(w)$, is the maximal sum of powers of $b$ in a cyclic subword of $w$. The \emph{bottom} of $w$ is the set of occurrences of $a$ or $a^{-1}$ where a cyclic subword attaining this maximum begins. If $w\ne 1$, this set is nonempty: indeed, the last occurrence of a power of $b$ (if any) before a cyclic subword attaining the maximum has to be a negative power of $b$, while the first occurrence of a power of $b$ (if any) in that cyclic subword must be a positive power of $b$, so by reduction a nontrivial power of $a$ must separate them. Similarly, the \emph{top} of $w$ is the set of occurrences of $a$ or $a^{-1}$ where a cyclic subword attaining the maximum ends. Any cyclic subword starting in the bottom and ending in the top of $w$ maximizes the sum of powers of $b$.

More generally, for $0\le j\le h(w)$, the \emph{$j$-th level} of $w$ is the set of occurrences of $a$ or $a^{-1}$ where a cyclic subword starting in the bottom and whose sum of powers of $b$ equals $j$ ends (or, equivalently, where a cyclic subword ending in the top and whose sum of powers of $b$ equals $h(w)-j$ begins). We also allow for empty levels, in case no $a$ or $a^{-1}$ occurs between consecutive $b$'s or $b^{-1}$'s. Thus the $0$-th level is the bottom and the $h(w)$-th level is the top. The \emph{starting level} of $w$ is the level entered at the last occurrence of $b$ or $b^{-1}$ and exited at the first occurrence of $b$ or $b^{-1}$. We shall often denote it by $j_0$.

The \emph{$a$-weight of the $j$-th level of $w$}, denoted $N_{a,j}(w)$, is the sum of the powers of $a$ that occur in the $j$-th level. These quantities will appear in theorem \ref{CAT0ex}.

For an example, let $w = ba^2b^{-1}a^{-1}b^{-1}aba^{-2}b^2a^{-1}b^{-1}ab^{-1}a^3$. The levels of $b$ may be represented as follows:
\[\begin{array}{rccccccccccccccc}
\text{Level } 3 \text{ (Top)} & & & & & & & & & & & a^{-1} \\
& & & & & & & & & & b & & b^{-1} & \\
\text{Level } 2 &  & a^2 & & & & & & & & & & & a & \\
& b &  & b^{-1} & & & & & & b & & & & & b^{-1} \\
\text{Level } 1 &  & & & a^{-1} & & & & a^{-2} & & & & & & & a^3 \\
&  & & & & b^{-1} & & b & \\
\text{Level } 0 \text{ (Bottom)} &  & & & & & a &
\end{array}\]
We have $h(w) = 3$, $j_0 = 1$, $N_{a,0}(w) = 1$, $N_{a, 1}(w) = -1 - 2 + 3 = 0$, $N_{a, 2} = 2 + 1 = 3$, and $N_{a, 3} = -1$.

In the tree of spaces construction \ref{Thick}, the diagram above corresponds to a sequence of adjacent spaces met by a combinatorial path that we shall use to estimate translation lengths. The stacks of $a$'s will correspond to vertex spaces that are euclidean planes and in which these $a$'s translate, and the letters $b$ in-between will translate across edge spaces binding the planes. The gluing will roughly have the effect of locally turning $b$ into a rotation and $b^{-1}$ into the inverse rotation, thus the translation axes of the $a$'s at a local scale will essentially depend on their level in $w$ only.

\subsection{CAT(0) spaces} ~
\label{CAT0}

We recall here general definitions and facts about CAT(0) spaces and groups. They can be found in \cite{BH99}.

A metric space $(X, d)$ is \emph{geodesic} if any two points in $X$ are joined by a \emph{geodesic}, i.e. an isometrically embedded interval. A \emph{geodesic triangle} in $X$ is a triple $\Delta = ([x, y], [y, z], [z, x])$ with $x, y, z\in X$ and where $[x, y]$, $[y, z]$, $[z, x]$ (the \emph{sides} of $\Delta$) are choices of geodesics between $x$ and $y$, $y$ and $z$, and $z$ and $x$ respectively. The \emph{Euclidean comparison triangle} $\bar{\Delta} = ([\bar{x}, \bar{y}], [\bar{y}, \bar{z}], [\bar{z}, \bar{x}])\subset \mathbb{E}^2$ is the unique (up to isometry) triangle of the euclidean plane such that $d(\bar{x}, \bar{y}) = d(x, y)$, $d(\bar{y}, \bar{z}) = d(y, z)$ and $d(\bar{z}, \bar{x}) = d(z, x)$. Given a point $p$ in one of the three sides of $\Delta$, say $[x, y]$, its \emph{comparison point} $\bar{p}$ is the point of $[\bar{x}, \bar{y}]$ such that $d(\bar{p}, \bar{x}) = d(p, x)$ (and so $d(\bar{p}, \bar{y}) = d(p, y)$).

A metric space $(X, d)$ is \emph{CAT(0)} if it is geodesic and for all geodesic triangle $\Delta$, for any $p, q$ in the union of the sides of $\Delta$, we have $d(p, q) \le d(\bar{p}, \bar{q})$. CAT(0) metric spaces are \emph{uniquely geodesic} (i.e. there is exactly one geodesic between any two points).

A metric space $(X, d)$ is \emph{proper} if its closed balls are compact.

Let $G$ be a discrete group acting on a metric space $X$ by isometries. The action is \emph{(metrically) proper} if for all $x\in X$, there exist $\eps > 0$ such that $\{g\in G | gB(x, \eps)\cap B(x, \eps)\}$ is finite. In particular, the action is properly discontinuous; the two notions coincide if $X$ is proper. It is \emph{cocompact} if there exists a compact $K\subset X$ such that $G\cdot K = X$. A proper cocompact action is called \emph{geometric}. A group $G$ \emph{is a CAT(0) group} if it acts geometrically on some CAT(0) space $(X, d)$. In particular, such a group is finitely generated by the \textsc{Švarc-Milnor} lemma and the CAT(0) metric space $X$ is proper by the metric \textsc{Hopf-Rinow} theorem.

Let $X$ be a CAT(0) space and $\gamma\in \mathrm{Isom}(X)$. The \emph{translation length} of $\gamma$ is the number
\[\|\gamma\| = \inf_{x\in X} d(\gamma(x), x)\]
and the \emph{minimal set} of $\gamma$ is $\Min(\gamma) = \{x\in X | d(\gamma(x), x) = \|\gamma\|\}$. The isometry $\gamma$ is \emph{semisimple} is its minimal set is nonempty (otherwise it is \emph{parabolic}). If $\gamma$ is semisimple, it is called \emph{elliptic} if $\|\gamma\| = 0$ (i.e. $\gamma$ has fixed points) and \emph{hyperbolic} if $\|\gamma\| > 0$. A group $G$ is sometimes called \emph{weakly CAT(0)} if it acts properly by semisimple isometries on a CAT(0) space. A geometric action is always by semisimple isometries.

A subset of the CAT(0) space $X$ is \emph{convex} if any geodesic with endpoints in $X$ lies in $X$. A convex subset of a CAT(0) space is itself CAT(0). If a nonempty closed convex subset $C$ is stabilized by an isometry $\gamma$, then the translation lengths of $\gamma$ and its restriction to $C$ are the same, and $\gamma$ is semisimple if and only if its restriction to $C$ is semisimple.

The minimal set $\Min(\gamma)$ of an isometry $\gamma$ of $X$ is a closed convex subset. If $\gamma$ is hyperbolic, then $\Min(\gamma)$ is isometric to a cartesian product $C\times \R$ (with the $l^2$-metric) where $C$ is a closed convex subset of $X$. Moreover, any isometry $g$ of $X$ that commutes with $\gamma$ stabilizes $\Min(\gamma)$ and its restriction to $\Min(\gamma)$ preserves the product structure, i.e. it is of the form $(g_C, g_\R)$ where $g_C$ is an isometry and $g_\R$ is a translation. If $g$ is semisimple, then so is $g_C$. We have $\gamma_C = id_C$, and $\gamma_\R$ is a translation by $\|\gamma\|$. The fibres $\{c\}\times\R$ of $\Min(\gamma)$ are the \emph{axes} of $\gamma$, i.e. the isometrically embedded real lines in $X$ that are left invariant by $\gamma$; hence $\Min(\gamma)$ is the union of the axes of $\gamma$.

For $\gamma, g\in \mathrm{Isom}(X)$, if $\Min(\gamma)$ is the minimal set of $\gamma$ then we have $\Min(g\gamma g^{-1}) = g\Min(\gamma)$ and $\|g\gamma g^{-1}\| = \|\gamma\|$.



\subsection{The thickened construction} ~
\label{Thick}

We start by recalling the combination theorem from \cite{BH99}, chapter II.11.

Suppose a group $G$ splits over a graph of groups $(\mathcal{G}(V, E, \alpha, \bar{~}), (G_v)_{v\in V}, (G_e)_{e\in E}, (i_{e,\alpha})_{e\in E})$. Suppose $G_v$ acts on a space $X_v$ and $G_e$ acts on a space $X_e$ for all $v\in V$ and $e\in E$ (with $X_e = X_{\bar{e}}$), and that there are $G_e$-invariant subspaces $X_{e, \alpha}\subset X_e$ and $G_e$-equivariant maps $f_{e, \alpha}: X_{e, \alpha}\longrightarrow X_{\alpha(e)}$ for all $e\in E$. Then one may glue the actions together according to the Bass-Serre tree $T$ as follows: take copies $\tilde{X}_{\tilde{v}}$ of $X_v$ for all lift $\tilde{v}\in T$ of $v\in V$ and copies $\tilde{X}_{\tilde{e}}$ of $X_e$ for all lifts $\tilde{e}\subset T$ of $e\in E$ (with only one copy for each pair of opposite edges), and glue every copy $\tilde{x}\in \tilde{X}_{\tilde{e}}$ of a point $x\in X_{e, \alpha}\subset X_e$ to the copy $\tilde{y}$ of $f_{e,\alpha}(x)\in X_{\alpha(e)}$ in $\tilde{X}_{\widetilde{\alpha(e)}}$, such that $\widetilde{\alpha(e)}$ is the initial vertex of $\tilde{e}$ in $T$, and if $g\in \mathrm{Stab}(\widetilde{\alpha(e)})$ then $g\tilde{x}\in \tilde{X}_{g\tilde{e}}$ is glued to $g\tilde{y}$ (see figure \ref{FigExBH}).

\begin{figure}[h!]
    \centering
    \includegraphics[width=0.5\linewidth]{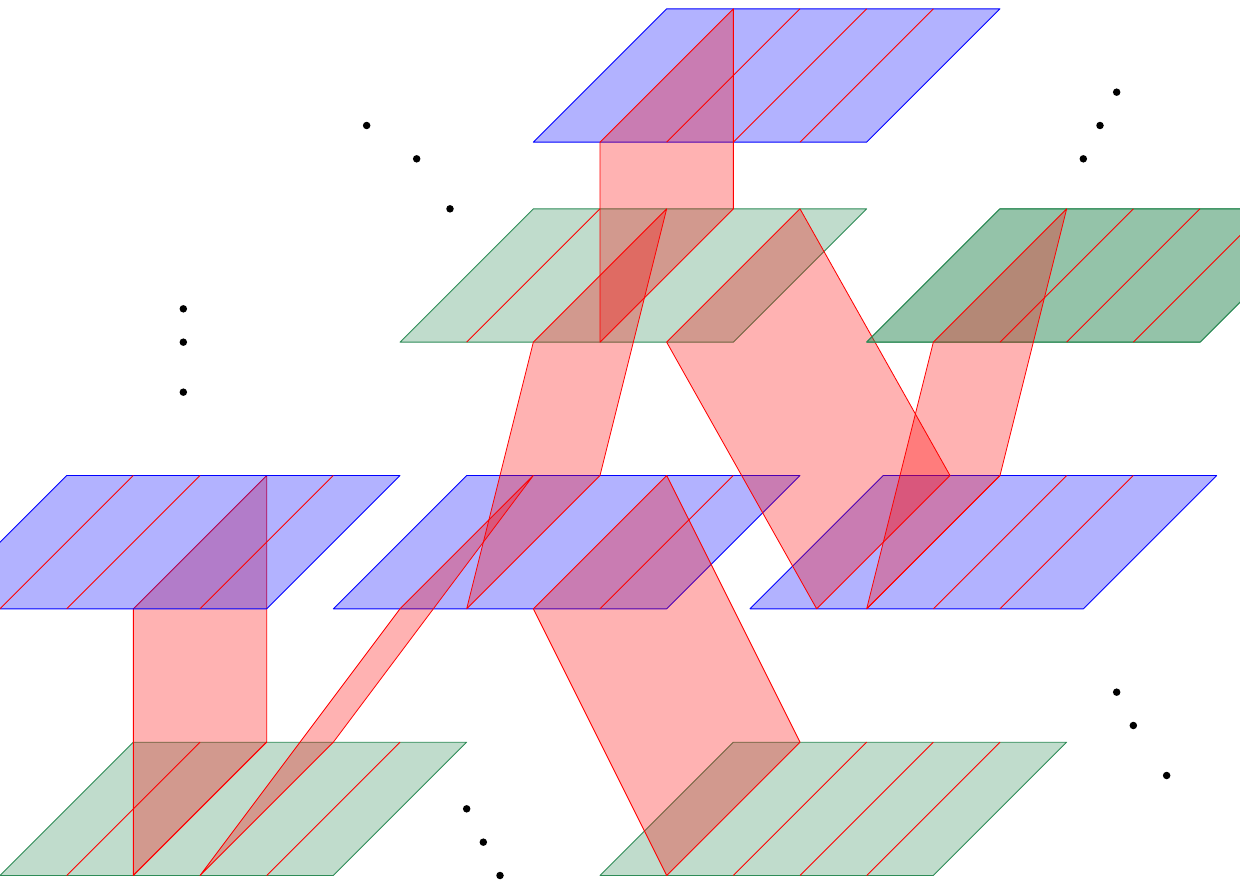}
    \caption{Example of a tree of spaces. Here the two groups are $\Z^2$ acting on planes (blue and green respectively), amalgamated over $\Z$ acting on a strip whose intersection with either vertex space is a line (red). This is the universal cover of a green torus and a blue torus glued to both sides of a red cylinder.}
    \label{FigExBH}
\end{figure}

Let $X$ be the resulting space. Then for all lifts $\tilde{v}\in T$ of $v\in V$, the conjugate of $G_v$ stabilizing $\tilde{v}$ acts on $\tilde{X}_{\tilde{v}}$, and similarly for all $e\in E$ and all lift $\tilde{e}$ of $e$. The actions fit together into an action of $G$ on $X$. If the actions of $G_v$ on $X_v$ and $G_e$ on $X_e$ are cocompact for all $v\in V$, $e\in E$, then the action of $G$ on $X$ is cocompact.

Assume now that the $(X_v, d_v)$ and the $(X_e, d_e)$ are metric spaces and the maps $f_{e, \alpha}$ are isometric embeddings. If the actions of $G_v$ on $X_v$ and $G_e$ on $X_e$ are proper for all $v\in V$ and $e\in E$, and if there is $\eps > 0$ such that $d_e(X_{e, \alpha}, X_{\bar{e}, \alpha}) > \eps$ for all $e\in E$, then the action of $G$ on $X$ is also proper. Finally, assume that for all $v\in V$ and $e\in E$, $X_v$ and $X_e$ are CAT(0), and $X_{e, \alpha}$ is a convex subspace of $X_e$. Then $X$ is CAT(0).

These facts altogether allow to prove, for instance, that a free product of CAT(0) groups is CAT(0) (taking $X_e$ to be segments and $X_{e, \alpha}, X_{\bar{e}, \alpha }$ to be the endpoints of $X_e$, and any map  for $f_{e, \alpha}$), as well as a free amalgamated product of two CAT(0) groups over $\Z$ (taking $X_e$ to be a strip on which $\Z$ acts by translation, and gluing its sides isometrically along axes of the $\Z$ in both groups, after possibly rescaling one of the spaces so that the translation lengths match).

In the case of an HNN extension with CAT(0) vertex group $G_v$ and $G_e\cong \Z$, this construction produces a geometric action of $G$ on a CAT(0) space, provided that there exists a geometric action of $G_v$ on a CAT(0) space $X$ such that the translation lengths of the generators of $i_{e, \alpha}(G_e)$ and $i_{\bar{e}, \alpha}(G_e)$ are the same. This is not always the case: indeed, if $\phi\in \Aut(F_3)$ is given by $\phi(a)=a$, $\phi(b)=ba$, and $\phi(c)=ca^2$, then the construction worked once proves that the $F_2\rtimes\Z$ defined by the restriction of $\phi$ to $F_{\{a, b\}}$ is CAT(0), but one cannot repeat the procedure to obtain a geometric CAT(0) action of $F_3\rtimes_\phi\Z$, as this group is not CAT(0) \cite{Ger94}.

However, Samuelson proved that if $\phi\in \Aut(F_3)$ satisfies $\phi(a)=a$, $\phi(b) = ba^k$ with $k\ne 0$, and $\phi(c) = cw(a, b)$ where the sum of powers of $b$ in $w$ is nonzero, then one can adjust some geometric parameters in the first step of the construction in such a way to have $\|t\| = \|w(a,b)t\|$, so that the second step works \cite{Sam06}. His idea was to bring the axes of $t$ and $w$ "sufficiently aligned" by making $\|a\|$ (or, equivalently, the angle $\theta > 0$ between the axes of $t$ and $ta^k$) very small, so that the displacement of a point of the axis of $t$ under $w$ is essentially due to the contributions of $b$; then to introduce a "twist" parameter by which $b$ translates as it moves a point across the flat strip between the planes, and to take advantage of the nonzero exponent sum to make the total twist "cancel" with $t$ so as to arrange $\|wt\| < \|t\| = 1$; and to conclude by continuously increasing $\|wt\|$ back to $1$ by enlarging the width of the strips.

We shall thus be interested in the remaining cases where the sum of powers of $b$ in $w$ is zero. In this situation, Samuelson's twist parameter will not be of any help, because the introduced translations will tend to cancel altogether. Instead, we will regard $b$ as having a "rotating" effect on the axes in the flat, and show that in some cases these rotations move a point in the right way. We achieve this by replacing the strips in the construction with a product of a thin rectangle with a line.

For $k \ge 0$, let $\phi_k$ be the automorphism of $F_2$ such that $\phi_k(a)=a$ and $\phi_k(b)=ba^k$. We have the HNN splitting $F_2\rtimes_{\phi_k}\Z \cong \Z^2 \bigast_{b^{-1}tb=ta^k}$, where $\Z^2 = \langle a, t\rangle$. We let $X_v = \R^2$ be a plane on which $G_v = \Z^2$ acts by translations. We want a metric such that $\|t\| = \|ta^k\| = 1$. If $k=0$, under this condition the metric is determined by the choice of $\|a\| > 0$ and the angle $\varphi\in (0, \pi)$ between the axes of $t$ and $a$. If $k>0$, it is determined by the angle $\theta\in (0, \pi)$ between the axes of $t$ and $ta^k$; we then have $\|a\| = \frac{2}{k}\sin(\frac{\theta}{2})$ (see figure \ref{FigAxesInPlane}). We choose orthonormal coordinates of $\R^2$ such that $t$ translates by the vector $\begin{pmatrix} 1 \\ 0 \end{pmatrix}$ and $ta^k$ by $\begin{pmatrix} \cos(\theta) \\ \sin(\theta) \end{pmatrix}$. We then let $\eps > 0$ and $D > 0$ and set $X_e = \R\times [-D, D]\times [0, \eps]$, $X_{e,\alpha} = \R\times[-D, D]\times\{0\}$ and $X_{\bar{e}} = \R\times[-D, D]\times\{\eps\}$. Finally, for $(u, v)\in \R\times[-D, D]$, we set $f_{e, \alpha}(u, v, 0) = (u, v)$ and $f_{\bar{e}, \alpha}(u, v, \eps) = (\cos(\theta)u-\sin(\theta)v, \sin(\theta)u + \cos(\theta) v)$ (see figure \ref{FigGluing}).

\begin{figure}[h!]
    \centering
    \includegraphics[width=0.5\linewidth]{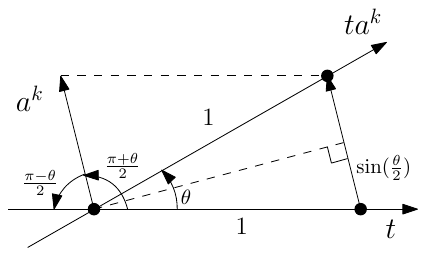}
    \caption{The axes of $t$ and $ta^k$ in the flat plane, $k > 0$}
    \label{FigAxesInPlane}
\end{figure}

\begin{figure}[h!]
    \centering
    \includegraphics[width=0.8\linewidth]{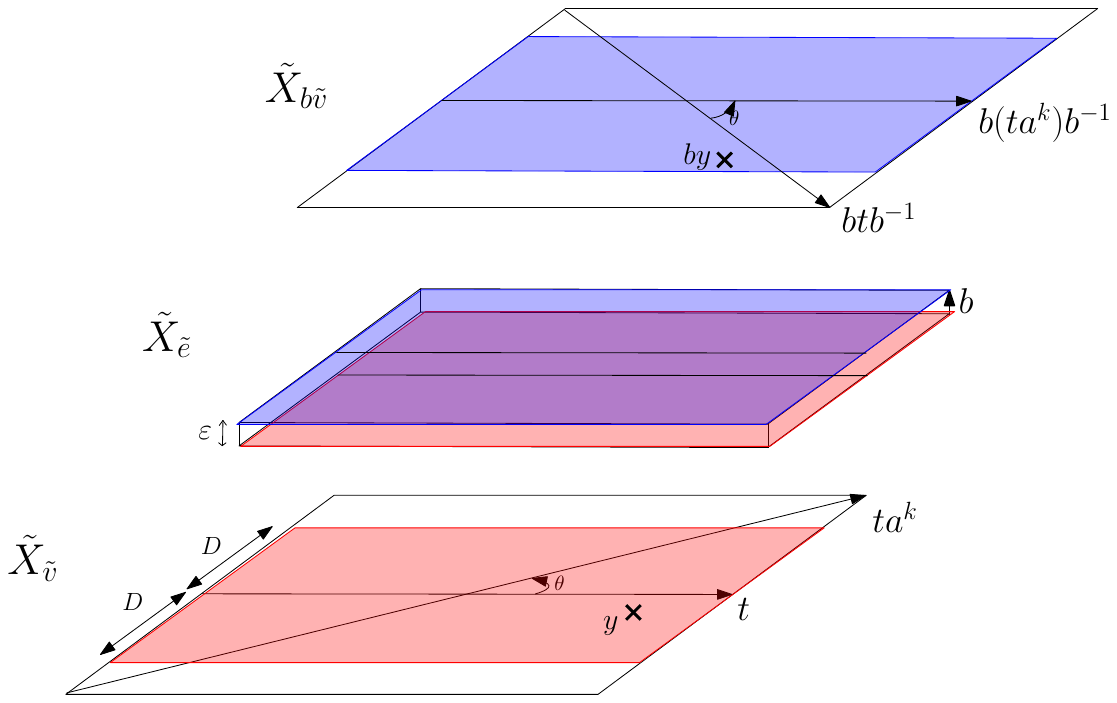}
    \caption{The gluing between a plane $\tilde{X}_{\tilde{v}}$ and its image under $b$, with the edge space $\tilde{X}_{\tilde{e}}$ joining them.}
    \label{FigGluing}
\end{figure}

If $k > 0$, we denote by $X_k(\theta, D, \eps)$ the space obtained from the gluing construction, and $X_0(\varphi, \|a\|, D, \eps)$ if $k=0$. This is a CAT(0) space on which $F_2\rtimes_{\phi_k}\Z$ acts geometrically. As in Samuelson's theorem, we need a lemma for a future intermediate value argument:

\begin{lem}
\label{Cont}
    Let $k\ge 0$, $D > 0$, and let $\theta\in (0, \pi)$ if $k > 0$ and $\varphi\in (0, \pi)$, $\|a\| > 0$ if $k = 0$. Let $g\in F_2\rtimes_{\phi_k}\Z$, and let $f: \R_+^* \longrightarrow \R_+$ be such that $f(\eps)$ is the translation length of $g$ in $X_k(\theta, D, \eps)$ if $k>0$ and in $X_0(\varphi, \|a\|, D, \eps)$ if $k=0$. Then $f$ is a continuous function.
\end{lem}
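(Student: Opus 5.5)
The strategy is to compare the spaces for different values of $\eps$ through explicit equivariant maps with controlled Lipschitz constants. Fix $\eps, \eps' > 0$. The spaces $X_k(\theta, D, \eps)$ and $X_k(\theta, D, \eps')$ are built from the same vertex spaces, the same Bass--Serre tree, and the same gluing maps. Only the thickness of the edge spaces $\R\times[-D,D]\times[0,\eps]$ differs. Define $\Psi_{\eps,\eps'}: X_k(\theta,D,\eps)\to X_k(\theta,D,\eps')$ to be the identity on every vertex plane and the map $(u,v,s)\mapsto (u,v,\frac{\eps'}{\eps}s)$ on every edge space. This map is compatible with the gluings, since it fixes the faces $s=0$ and sends $s=\eps$ to $s=\eps'$. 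It is equivariant for the action of $F_2\rtimes_{\phi_k}\Z$, because the group acts on edge spaces only through the first two coordinates. The case $k=0$ is treated identically.

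The next step is a Lipschitz estimate for $\Psi_{\eps,\eps'}$. On each piece it is $\max(1,\eps'/\eps)$-Lipschitz for the piecewise Euclidean metric. Any geodesic in the glued space is a concatenation of segments, each lying in one piece; this uses the standard description of the metric of a tree of spaces as the induced length metric (\cite{BH99}, II.11). Hence $\Psi_{\eps,\eps'}$ is $L$-Lipschitz with $L=\max(1,\eps'/\eps)$. The inverse map $\Psi_{\eps',\eps}$ is likewise $\max(1,\eps/\eps')$-Lipschitz. Together these give the two-sided bound $\min(1,\eps'/\eps)\, d \le d' \le \max(1,\eps'/\eps)\, d$ between corresponding points.

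The bound then passes to translation lengths. For any $x$, equivariance gives $d'(g\Psi x,\Psi x)=d'(\Psi gx,\Psi x)\le L\, d(gx,x)$. Taking infima over $x$ and using that $\Psi$ is a bijection yields $f(\eps')\le \max(1,\eps'/\eps) f(\eps)$, and symmetrically $f(\eps)\le\max(1,\eps/\eps')f(\eps')$. Letting $\eps'\to\eps$ gives $|f(\eps')-f(\eps)|\le |\eps'/\eps-1|\max(f(\eps),f(\eps'))$. Continuity follows once $f$ is known to be locally bounded. Local boundedness comes from the same inequality: $f(\eps')\le 2f(\eps)$ for all $\eps'\le 2\eps$. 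In fact the argument shows $f$ is monotone nondecreasing and locally Lipschitz.

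The main obstacle is the second step, namely justifying that a piecewise Lipschitz map with a uniform constant is globally Lipschitz on the glued space. This requires that the distance in the tree of spaces is the infimum of lengths of piecewise paths, each piece contained in a single vertex or edge space. I would cite the construction in \cite{BH99}, where the metric is defined as the quotient pseudometric of the disjoint union, and observe that $\Psi$ scales the length of every such chain by at most $L$. This avoids any need to analyse geodesics explicitly.
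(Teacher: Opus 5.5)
Your proposal is correct and follows the same route as the paper: an equivariant map that is the identity on the vertex planes and rescales the slabs $\R\times[-D,D]\times[0,\eps]$ linearly in the last coordinate. You show it is bilipschitz for the glued length metric, and this squeezes $f(\eps')$ between multiples of $f(\eps)$ by constants tending to $1$. Your one-sided constants $\min(1,\eps'/\eps)$ and $\max(1,\eps'/\eps)$ are a slight sharpening of the paper's symmetric $\Lambda$: they also give monotonicity of $f$, and they show that the paper's separate case distinction on whether $\|g\|>0$ is unnecessary.
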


\begin{proof} ~

Let $\eps > 0$. For $\eps'\in (-\eps, \eps)$, there is a natural, $F_2\rtimes_{\phi_k}\Z$-equivariant, piecewise-linear homeomorphism $I: X_k(\theta, D, \eps) \longrightarrow X_k(\theta, D, \eps + \eps')$ (resp. $X_0(\varphi, \|a\|, D, \eps) \longrightarrow X_0(\varphi, \|a\|, D, \eps + \eps')$) obtained by piecing together the linear maps $\R\times [-D, D]\times [0, \eps] \longrightarrow \R\times [-D, D]\times [0, \eps + \eps']$ and the identity maps on the planes. The map $I$ restricted to the edge spaces $X_{e}$ is $\Lambda = \max(1+\frac{\eps'}{\eps}, \frac{1}{1+\frac{\eps'}{\eps}})$-bilipschitz, hence so is $I$ by definition of a length pseudo-metric obtained by gluing length spaces. If $g$ is the identity, then $f$ is constant equal to $0$ and therefore continuous at $\eps$. Otherwise (since $F_2\rtimes_{\phi_k}\Z$ is torsion-free and the actions are by semisimple isometries because of cocompactness) we have $\|g\| > 0$ on either space and
\[0 < \frac{1}{\Lambda}f(\eps) \le f(\eps + \eps') \le \Lambda f(\eps),\]
which proves continuity at $\eps$ since $\Lambda \longrightarrow 1$ as $\eps' \longrightarrow 0$.

\end{proof}

We now formulate the key lemmas for proving that in certain situations, we can arrange $\|wt\| < \|t\|=1$.

\begin{lem}
\label{FlatkPos}
    Let $k > 0$ and $w\in F_2$ be cyclically reduced and $b$-balanced of length $N$, and let $j_0\in \{0, \ldots, h(w)\}$ be the starting level of $w$. Let $\theta\in (0, \pi)$, $D \ge 2N$ and $\eps > 0$. Let $\rho$ be the rotation of the plane $X_v$ by the angle $\theta$ in the coordinates above, and let $\vec{a}$ be the vector in $\R^2$ by which $a$ translates in these coordinates. Let $x_0$ be the point of $X_v$ with coordinates $(-1, 0)$, and let $\tilde{x}_0\in X_k(\theta, D, \eps)$ be its lift in the lift $\tilde{X}_{\tilde{v}}$ of $X_v$ stabilized by $G_v = \langle a, t\rangle$. Let $x_1\in X_v$ be the point whose coordinates are $\displaystyle \sum_{j=0}^{h(w)} N_{a,j}(w)\rho^{j_0-j}\vec{a}$, and let $\tilde{x}_1$ be its lift in $\tilde{X}_{\tilde{v}}$. Then $d(wt\tilde{x}_0, \tilde{x}_1) \le N\eps$.
\end{lem}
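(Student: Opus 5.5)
The idea is to follow the letters of $w$ one at a time, starting from the point $t\tilde{x}_0$. Each letter $a^{\pm 1}$ acts as an exact translation inside a plane. Each letter $b^{\pm1}$ is replaced, at a cost of at most $\eps$, by a rotation $\rho^{\mp 1}$ of the base plane. Summing these costs along the word gives the bound $N\eps$.

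\emph{Starting point and one edge crossing.} Since $t$ translates $X_v$ by $(1,0)$, we have $t\tilde{x}_0=\tilde{o}$, the lift of the origin in $\tilde{X}_{\tilde{v}}$. So we must compare $w\tilde{o}$ with $\tilde{x}_1$. First I would record the effect of a single crossing. The edge $\tilde{e}$ joining $\tilde{v}$ and $b\tilde{v}$ (coming from $b^{-1}tb=ta^k$) carries a copy of $\R\times[-D,D]\times[0,\eps]$. Its side $\eps$ is glued to $\tilde{X}_{\tilde{v}}$ by the identity on $\R\times[-D,D]$, and its side $0$ is glued to $b\tilde{X}_{\tilde{v}}$ by $f_{\bar e,\alpha}$, which is the rotation $\rho$. (Equivalently one can swap the roles of $b$ and $b^{-1}$; only the sign convention changes.) Take a point $\tilde y\in\tilde{X}_{\tilde{v}}$ with coordinates $y$ satisfying $|y|\le D$, so that $y\in\R\times[-D,D]$. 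The segment $\{y\}\times[0,\eps]$ joins $\tilde y$ to $b\cdot\widetilde{\rho^{-\sigma}y}$ for a fixed sign $\sigma\in\{\pm1\}$ determined by the gluing. This gives
\[ d\big(\tilde y,\ b\,\widetilde{\rho^{-\sigma} y}\big)\le\eps,\qquad d\big(\tilde y,\ b^{-1}\widetilde{\rho^{\sigma} y}\big)\le\eps, \]
where the second inequality follows from the first by applying the isometry $b^{-1}$.

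\emph{Induction along $w$.} Write $w=s_1\cdots s_N$ in letters and define points $y_N,\dots,y_0$ of $X_v$ backwards. Set $y_N=0$. If $s_i=a^{\pm1}$, set $y_{i-1}=y_i\pm\vec a$; then $s_i\tilde y_i=\tilde y_{i-1}$ exactly. If $s_i=b^{\pm1}$, set $y_{i-1}=\rho^{\mp\sigma}y_i$; then by the crossing estimate $d(s_i\tilde y_i,\tilde y_{i-1})\le\eps$. Applying the isometry $s_1\cdots s_{i-1}$ and summing along the chain $s_1\cdots s_i\tilde y_i$ for $i=N,\dots,0$ gives
\[ d(w\tilde o,\tilde y_0)\le N\eps. \]
The hypothesis $D\ge 2N$ is used here to keep every $y_i$ inside the glued region. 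Indeed, each $y_i$ is obtained from $0$ by rotations and at most $N$ translations by $\pm\vec a$, and $\|\vec a\|=\frac2k\sin\frac\theta2\le 2$. Hence $|y_i|\le 2N\le D$, so every crossing takes place inside $\R\times[-D,D]$.

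\emph{Identifying $y_0$.} Unwinding the recursion, $y_0=\sum_i \pm\rho^{-\sigma\beta_i}\vec a$. The sum runs over the occurrences $s_i=a^{\pm1}$, and $\beta_i$ is the exponent sum of $b$ in the prefix $s_1\cdots s_{i-1}$; the final rotated copy of $y_N=0$ contributes nothing. By definition of the starting level, an occurrence of $a$ in level $j$ has $\beta_i=j-j_0$ (this is well defined because $w$ is $b$-balanced). Grouping by levels therefore gives $y_0=\sum_j N_{a,j}(w)\rho^{-\sigma(j-j_0)}\vec a$. With the sign fixed by the gluing convention, $\sigma=1$, this is exactly $x_1$.

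\emph{Main obstacle.} The main difficulty is bookkeeping of orientations. One must check carefully which side of $X_e$ is glued to $\tilde{X}_{\tilde{v}}$ and which to $b\tilde{X}_{\tilde{v}}$, given that the edge group is $\langle t\rangle$ identified with $\langle ta^k\rangle$. One must also check how this translates into the sign $\sigma$, so that the exponent comes out as $j_0-j$ rather than $j-j_0$. If the convention produced the opposite sign, I would first conjugate by the symmetry of the plane exchanging the roles of $b$ and $b^{-1}$, or reread the level convention; the estimate itself does not depend on the sign.
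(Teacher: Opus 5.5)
Your argument is the paper's argument. The paper introduces $\tau\colon F_2\to\mathrm{Isom}(\R^2)$, with $\tau(a)$ the translation by $\vec a$ and $\tau(b)=\rho^{-1}$. It then proves by induction on the suffixes $w_l$ that $d(w_ly_0,\tau(w_l)y_0)\le l\eps$ and $d(\tau(w_l)y_0,y_0)\le 2l\le D$. Your backward points are exactly $y_i=\tau(s_{i+1}\cdots s_N)(0)$, and your chain of inequalities is the same induction.

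What is not right is the orientation bookkeeping. You single it out as the main obstacle but do not carry it out, and as written it contains two errors that happen to cancel.

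First, your recursion is reversed relative to your own crossing estimates. Applying $b$ to your second inequality with $y=y_i$ gives $d(b\tilde y_i,\widetilde{\rho^{\sigma}y_i})\le\eps$. So for $s_i=b^{\pm1}$ you must set $y_{i-1}=\rho^{\pm\sigma}y_i$, not $\rho^{\mp\sigma}y_i$, and then $y_0=\sum_j N_{a,j}(w)\rho^{\sigma(j-j_0)}\vec a$.

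Second, $\sigma$ is forced by the construction and equals $-1$, not $1$. Since $t=b(ta^k)b^{-1}$, the edge joining $\tilde v$ to $b\tilde v$ has stabilizer $\langle t\rangle$. Moreover $t$ acts on $b\tilde X_{\tilde v}$ as translation by $\rho(1,0)$ in transported coordinates. So this edge space is attached to $\tilde X_{\tilde v}$ by the identity and to $b\tilde X_{\tilde v}$ by $\rho$, exactly as you describe (you swapped the labels $0$ and $\eps$ relative to the paper, which is harmless). Hence the segment $\{y\}\times[0,\eps]$ joins $\tilde y$ to $b\,\widetilde{\rho y}$.

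With both corrections, $b\tilde z$ is $\eps$-close to $\widetilde{\rho^{-1}z}$, i.e. $\tau(b)=\rho^{-1}$ as in the paper. Then $y_0=\sum_j N_{a,j}(w)\rho^{j_0-j}\vec a$ is the coordinate vector of $x_1$.

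Your proposed fallback is not legitimate. The lemma is about a fixed space, a fixed action and a fixed target point, and the sign is precisely the content of the exponent $j_0-j$. With the opposite sign you would be estimating the distance to $\sum_j N_{a,j}(w)\rho^{j-j_0}\vec a$, which is a different point. The difference matters downstream. For $w=[a,b]$ and $k=1$, the correct point is $(-4\sin^2(\theta/2),0)$, which lies in the open unit disc about $(-1,0)$ for small $\theta$. The other sign gives $4\sin^2(\theta/2)(\cos\theta,\sin\theta)$, at distance $\sqrt{1+8\sin^2(\theta/2)}>1$ from $(-1,0)$ for every $\theta$. So the sign check cannot be waved away, but it is a one-line verification. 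Once it is done as above, your proof is complete and coincides with the paper's.
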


\begin{proof} ~

For $l\in \{0, \ldots, N\}$, let $w_l$ be the suffix of $w$ of length $l$. Let $y_0 = t\tilde{x}_0$ be the origin of $\tilde{X}_{\tilde{v}}$. Let $\tau: F_2\longrightarrow \mathrm{Isom}(\tilde{X}_{\tilde{v}}) = \mathrm{Isom}(\R^2)$ be the group homomorphism sending $a$ to the translation by $\vec{a}$ and $b$ to the rotation $\rho^{-1}$.

We now see by induction that for all $l\in \{0, \ldots, N\}$, we have $d(w_ly_0, \tau(w_l)y_0) \le l\eps$ and $d(\tau(w_l)y_0, y_0) \le 2l$. This is clear for $l=0$. To go from $l$ to $l+1$, first consider the case where the first letter of $w_{l+1}$ is $a^\nu$, $\nu\in \{-1, 1\}$. Since $\tau(w_l)y_0\in \tilde{X}_{\tilde{v}}$, we have $a^\nu\tau(w_l)y_0 = \tau(w_l)y_0 + \nu \vec{a} = \tau(a^\nu)\tau(w_ly_0) = \tau(w_{l+1}y_0)$, hence $d(w_{l+1}y_0, \tau(w_{l+1})y_0) = d(a^\nu w_ly_0, a^\nu \tau(w_l)y_0) = d(w_ly_0, \tau(w_l)y_0) \le (l+1)\eps$ and $d(\tau(w_{l+1})y_0, y_0) \le d(\tau(w_l)y_0, y_0) + \|a\| \le 2l+2$, since $\|a\| = \frac{2}{k}\sin(\frac{\theta}{2}) \le 2$.

Now assume that the first letter of $w_{l+1}$ is $b^\nu$, $\nu\in\{-1, 1\}$. Then $b^\nu \tau(w_l)y_0\in \tilde{X}_{b^\nu\tilde{v}}$ is a distance at most $2l \le D$ from $b^\nu y_0$, so it is in the lift $\tilde{X}_{\tilde{e}}$ of $X_e$ connecting $\tilde{X}_{\tilde{v}}$ to $\tilde{X}_{b^\nu\tilde{v}}$. According to the gluing, the matching point in $\tilde{X}_{\tilde{v}}$ on the opposite side of $\tilde{X}_{\tilde{e}}$ is $\rho^{-\nu}(b)\tau(w_l)y_0 = \tau(w_{l+1})y_0$ (see figure \ref{FigGluing}), thus $d(w_{l+1}y_0, \tau(w_{l+1})y_0) \le d(w_ly_0, \tau(w_l)y_0) + d(b^\nu\tau(w_l)y_0, \tau(w_{l+1})y_0) \le l\eps + \eps$. Also, since $\rho$ fixes $y_0$, we have $d(\tau(w_{l+1})y_0, y_0) = d(\tau(w_l)y_0, y_0) \le 2l$. This concludes the induction.

It only remains to check that $\tilde{x}_1 = \tau(w)y_0$. Indeed, $\tau(w)$ is the sum over the occurrences of $a^\nu$ ($\nu\in \{-1, 1\}$) of $\nu \rho^{-n}\vec{a}$, where $n$ is the sum of powers of the $b$'s that occur before said occurrence of $a^\nu$ in $w$. We have $n = j-j_0$ if the occurrence of $a^\nu$ is in the $j$-th level, and regrouping the terms gives the desired formula.

\end{proof}

We have a similar statement for $k=0$:

\begin{lem}
\label{Flatk0}
    Let $w\in F_2$ be a word of length $N$, and let $N_a\in \Z$ be the sum of powers of $a$ in $w$. Let $\varphi\in (0, \pi)$, $\|a\| > 0$, $D \ge N\|a\|$ and $\eps > 0$, and let $\vec{a}$ be the vector by which $a$ translates in the above coordinates for $X_v$. Let $x_0\in X_v$ have coordinate $(-1, 0)$, $x_1\in X_v$ have coordinates $N_a\vec{a}$, and let $\tilde{x}_0\in X_k(\varphi, \|a\|, D, \eps)$ and $\tilde{x}_1\in X_k(\varphi, \|a\|, D, \eps)$ be their lifts in the lift $\tilde{X}_{\tilde{v}}$ of $X_v$ stabilized by $G_v=\langle a, t\rangle$. Then $d(wt\tilde{x}_0, \tilde{x}_1) \le N\eps$.
\end{lem}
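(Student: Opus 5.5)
I will mirror the proof of Lemma \ref{FlatkPos}, replacing the rotation by the identity. When $k=0$, the gluing map $f_{\bar{e},\alpha}$ should be the identity on $\R\times[-D,D]$: the HNN relation becomes $b^{-1}tb=t$, so $b$ commutes with $t$, and the natural gluing identifies the two sides of the slab without rotating. (Formally $\theta=0$ in the gluing formula, and I will check this against the definition of $X_0(\varphi,\|a\|,D,\eps)$.) Set $y_0=t\tilde{x}_0$, which is the origin of $\tilde{X}_{\tilde{v}}$, since $t$ translates by $(1,0)$. Let $\tau:F_2\longrightarrow \mathrm{Isom}(\R^2)$ send $a$ to the translation by $\vec{a}$ and $b$ to the identity. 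For $l\in\{0,\ldots,N\}$, let $w_l$ be the suffix of $w$ of length $l$.

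I will prove by induction on $l$ that
\[d(w_ly_0,\tau(w_l)y_0)\le l\eps \quad\text{and}\quad d(\tau(w_l)y_0,y_0)\le l\|a\|.\]
Both hold trivially for $l=0$.

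If the new letter is $a^{\pm1}$, both $a^{\pm1}\tau(w_l)y_0$ and $\tau(w_{l+1})y_0$ are the same point $\tau(w_l)y_0\pm\vec{a}$ of $\tilde{X}_{\tilde{v}}$. So the first distance is unchanged, because $a$ acts isometrically, and the second grows by at most $\|a\|$.

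If the new letter is $b^{\nu}$, I use that $\tau(w_l)y_0$ lies within distance $l\|a\|\le N\|a\|\le D$ of $y_0$. It therefore lies in the region $\R\times[-D,D]$ along which the edge space $\tilde{X}_{\tilde{e}}$ joining $\tilde{X}_{\tilde{v}}$ to $\tilde{X}_{b^\nu\tilde{v}}$ is attached. Since the gluing is the identity, the point $b^\nu\tau(w_l)y_0$ is joined across this slab of thickness $\eps$ to the point of $\tilde{X}_{\tilde{v}}$ with the same coordinates, which is $\tau(w_l)y_0=\tau(w_{l+1})y_0$. Hence
\[d(w_{l+1}y_0,\tau(w_{l+1})y_0)\le d(b^\nu w_ly_0,b^\nu\tau(w_l)y_0)+\eps\le (l+1)\eps,\]
and the second distance is unchanged. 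This completes the induction.

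At $l=N$, $\tau(w)y_0=N_a\vec{a}=\tilde{x}_1$, since $\tau$ factors through the abelianization and kills $b$. Therefore $d(wt\tilde{x}_0,\tilde{x}_1)=d(wy_0,\tau(w)y_0)\le N\eps$. The hypothesis $D\ge N\|a\|$ is exactly what keeps every intermediate point inside the strip region, so the slab crossings are always available.

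The main obstacle is the crossing step for $b^{\nu}$, in particular the orientation conventions. I must confirm from the gluing that $b^\nu$ carries a point of $\tilde{X}_{\tilde{v}}$ lying in the glued region into $\tilde{X}_{b^\nu\tilde{v}}$, at distance exactly $\eps$ (across the slab) from the point of $\tilde{X}_{\tilde{v}}$ with the same coordinates. This must hold for both signs of $\nu$, i.e. whether $\tilde{X}_{\tilde{v}}$ is attached at the $\alpha$ or the $\bar{\alpha}$ side of $\tilde{X}_{\tilde{e}}$. Since the $k=0$ gluing is the identity, the two cases should be symmetric and the check routine.
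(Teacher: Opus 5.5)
Your proposal is correct and is essentially the paper's proof: mirror Lemma \ref{FlatkPos} with $\tau(b)=\mathrm{id}$, run the same induction with the second bound replaced by $d(\tau(w_l)y_0,y_0)\le l\|a\|$, and conclude from $\tau(w)y_0=N_a\vec{a}$. Your orientation worry is harmless: with rotation angle $0$, both attaching strips in $\tilde{X}_{\tilde{v}}$ (for the edges to $b\tilde{v}$ and to $b^{-1}\tilde{v}$) are $\R\times[-D,D]$, so $d(p,b^{\pm 1}p)\le\eps$ for every $p$ in that strip.
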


\begin{proof} ~

The proof is similar to that of lemma \ref{FlatkPos}, replacing $\tau(b) = \rho$ with $\tau(b) = id$ and the second inequality in the induction by $d(\tau(w_l)y_0, y_0) \le l\|a\|$. Since $\rho$ is replaced by the identity, the coordinates for $\tau(w)y_0$ are simply the sum over all occurrences of $a^\nu$ ($\nu\in \{-1, 1\}$) of $\nu\vec{a}$. This sum is $N_a\vec{a}$.

\end{proof}

\section{Automorphisms fixing two generators} ~
\label{F2}

We are now ready to prove the corrected statement of Samuelson's theorem 4.4.

\begin{pro}
\label{InvF2}
    Let $\phi$ be the automorphism of $F_3 = F_{\{a, b, c\}}$ such that $\phi(a)=a$, $\phi(b)=b$, and $\phi(c)=cw(a, b)$ where $w(a, b)\in F_{\{a, b\}}$. If $w(a, b)=1$ or $w(a, b)\notin [F_{\{a,b\}}, F_{\{a,b\}}]$, then $F_3\rtimes_\phi\Z$ is CAT(0). Otherwise, $F_3\rtimes_\phi\Z$ cannot act properly by semisimple isometries on a CAT(0) space.
\end{pro}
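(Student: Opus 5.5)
Since $\phi$ fixes $a$ and $b$, the suspension is the HNN extension of $\langle a,b,t\rangle = F_2\times\Z$ along the cyclic subgroup $\langle t\rangle$, with stable letter $c$ and relation $c^{-1}tc = w(a,b)t$. The argument has three parts: the trivial case, the positive case $w\notin[F_2,F_2]$, and the obstruction when $w\in[F_2,F_2]\setminus\{1\}$.

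\textbf{Case $w=1$.} Here the group is $F_3\times\Z$, which acts geometrically on (tree)$\times\R$ and is therefore CAT(0).

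\textbf{Case $w\notin[F_2,F_2]$.} I plan to use the $k=0$ construction $X_0(\varphi,\|a\|,D,\eps)$ for $F_2\times\Z$ with $\phi_0=\mathrm{id}$. Write $N_a$ and $N_b$ for the exponent sums of $a$ and $b$ in $w$.
\begin{itemize}
\item By symmetry (swapping $a$ and $b$, or applying $a\mapsto a^{-1}$), assume $N_a>0$. If $N_a=0$ but $N_b\neq 0$, swap $a$ and $b$ first.
\item Choose $\varphi$ obtuse and $\|a\|$ so that the vector $N_a\vec a - (1,0)$ has norm strictly less than $1$; for instance take $N_a\|a\|$ small with $\varphi$ slightly greater than $\pi/2$.
\item Lemma \ref{Flatk0} then gives $\|wt\|\le d(wt\tilde x_0,\tilde x_0)\le |N_a\vec a-(1,0)|+N\eps$, which is less than $1=\|t\|$ for $\eps$ small.
\item For large $\eps$, $\|wt\|$ should become at least $1$. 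If $N_b\ne0$, I expect the image of $wt$ under the projection to the Bass–Serre tree of $\langle a,b,t\rangle$ to be hyperbolic, so its translation length grows at least linearly in $\eps$. If $N_b=0$ but $w$ contains $b$, then $w$ is still not elliptic in the tree, so the same growth holds. If $w=a^{N_a}$ there is no $b$, so instead I will choose parameters in the plane making $|N_a\vec a+(1,0)|$ and $1$ straddle appropriately, or scale $\varphi$ continuously.
\item Lemma \ref{Cont} and the intermediate value theorem give a parameter with $\|wt\|=\|t\|$.
\item Finally, glue a strip for the HNN extension over $\langle t\rangle$ along axes of $t$ and $wt$, as in the combination theorem recalled in \ref{Thick}.
\end{itemize}
\emph{Main obstacle:} the large-$\eps$ estimate, i.e.\ showing rigorously that the translation length eventually exceeds $1$. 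A cleaner alternative I would try is to vary $\|a\|$ (or $\varphi$) continuously with $\eps$ fixed, comparing $\|wt\|$ to its abelian lower bound given by projecting onto the $\langle a,t\rangle$ plane factor.

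\textbf{Case $w\in[F_2,F_2]$, $w\ne1$.} This follows Gersten's argument. Suppose the group acts properly by semisimple isometries on a CAT(0) space $X$.
\begin{itemize}
\item $t$ is central in $\langle a,b,t\rangle$, so $\Min(t)=C\times\R$ and $a,b$ act as $(a_C,\alpha)$ and $(b_C,\beta)$ with $\alpha,\beta$ translations of $\R$.
\item Then $w$ acts as $(w_C,\,N_a\alpha+N_b\beta)=(w_C,0)$, because $w$ lies in the commutator subgroup.
\item $wt$ is conjugate to $t$, so $\|wt\|=\|t\|$. But $wt$ acts as $(w_C,\|t\|)$, hence $\|wt\|^2=\|w_C\|^2+\|t\|^2$, which forces $\|w_C\|=0$.
\item So $w_C$ fixes a point $p\in C$, and $w$ fixes a line $\{p\}\times\R$ pointwise.
\item $\langle w\rangle$ is infinite because the group is torsion-free and $w\ne1$. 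But an infinite cyclic group fixing a point contradicts properness of the action.
\end{itemize}
This gives the obstruction.
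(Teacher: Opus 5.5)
Your route is the paper's. In the positive case you use the $k=0$ thickened tree of spaces, Lemma \ref{Flatk0} to get $\|wt\|<1$ for small $\eps$, then Lemma \ref{Cont} with growth in $\eps$ to reach $\|wt\|=1$, then a strip. For $1\ne w\in[F_2,F_2]$ you analyse $\Min(t)=C\times\R$ in the same way, only in contrapositive order: conjugacy forces $\|w_C\|=0$, so $w$ is elliptic, contradicting properness. The paper instead uses properness first to get an axis of $w$ in $C$, then Pythagoras to get $\|wt\|>\|t\|$.

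Your Bass--Serre projection is the right justification for the growth the paper only asserts, and it is rigorous. Collapse each plane to a vertex and each slab $\R\times[-D,D]\times[0,\eps]$ onto an edge of length $\eps$ via its last coordinate; this map is equivariant and $1$-Lipschitz. Since $t$ is central, it lies in every edge group and acts trivially on the tree. Hence $\|wt\|\ge\eps\,\ell_T(w)$, where $\ell_T$ is the combinatorial translation length, and your ``cleaner alternative'' is unnecessary.

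One step fails as stated: ``if $N_b=0$ but $w$ contains $b$, then $w$ is not elliptic in the tree'' is false for non-cyclically-reduced $w$. For instance, $w=aba^{-1}$ becomes $w=bab^{-1}$ after your swap. Then $wt=b(at)b^{-1}$, so $\|wt\|=\|at\|\le|\vec a+(1,0)|<1$ for every $\eps$ with your parameters, and the intermediate value argument never reaches $1$. This $w$ is not covered by your case $w=a^{N_a}$ either.

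The fix is the paper's standing assumption from \S\ref{UPG}. Since $u\in F_{\{a,b\}}$ commutes with $t$, replacing $c$ by $cu$ turns the relation into $(cu)^{-1}t(cu)=(u^{-1}wu)t$. So you may replace $w$ by a cyclically reduced conjugate without changing the group, the exponent sums, or membership in the commutator subgroup. After that, ``contains $b$'' means $w$ is not conjugate into $\langle a\rangle$, hence is hyperbolic on the tree, and your three cases are exhaustive.

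There is also a sign slip. Lemma \ref{Flatk0} puts $\tilde x_0$ at $(-1,0)$, so the bound is $\|wt\|\le|N_a\vec a+(1,0)|+N\eps$, not $|N_a\vec a-(1,0)|+N\eps$. Your choice of obtuse $\varphi$ with $N_a\|a\|<2|\cos\varphi|$ makes the correct quantity less than $1$; it would make $|N_a\vec a-(1,0)|$ larger than $1$.
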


\begin{proof} ~

If $w(a, b)=1$, then $F_3\rtimes_\phi\Z = F_3\times \Z$ is clearly CAT(0). Assume $w(a, b)\notin [F_{\{a, b\}}, F_{\{a, b\}}]$. This means that either the sum of powers of $a$ or the sum of powers of $b$ in $w$ is nonzero. Up to a change of basis, we may assume $N_a \ne 0$. In that situation, clearly we may choose $\varphi\in (0, \pi)$, $\|a\| > 0$ such that $N_a\vec{a}$ is in the open euclidean disk of center $(-1, 0)$ and radius $1$; for example, one may choose $\vec{a} = \frac{1}{2N_a}\begin{pmatrix} 1 \\ 1 \end{pmatrix}$. Therefore, for $\eps < \frac{1-d(\vec{a}, (-1, 0))}{N}$ and $D\ge 2\|a\|$, lemma \ref{Flatk0} proves that we have $\|wt\| < 1$ in $X_0(\varphi, \|a\|, D, \eps)$. If $b$ appears in $w$, we find values of the parameters such that $\|wt\| = 1 = \|t\|$ by increasing $\eps$ continuously and using lemma \ref{Cont}. Otherwise, since the axis of $wt$ is in the plane $\tilde{X}_{\tilde{v}}$, we can choose $\vec{a}$ such that $\|wt\|=1$ in a way similar to the case where $k > 0$. In either case, the tree of spaces construction (with strips as edge spaces) works a second time, proving that $F_3\rtimes_\phi\Z$ is CAT(0).

Now assume $1\ne w(a, b)\in [F_{\{a, b\}}, F_{\{a, b\}}]$ and suppose for a contradiction that $F_3\rtimes_\phi\Z$ acts properly by semisimple isometries on a CAT(0) space $X$. We may assume by rescaling that the translation length of the automorphism letter $t$ is $\|t\|=1$. The minimal set $\Min(t)$ of $t$ splits isometrically as a product $C\times \R$, where $C\subset X$ is a closed convex subset of $X$, such that $t$ acts on $\Min(t)$ as $\mathrm{id}_C\times 1$ through this splitting. Now, any element $x\in F_{\{a, b\}}$ commutes with $t$, and therefore leaves $\Min(t)$ invariant and acts as a product isometry $x_C\times x_\R$ in the coordinates of the splitting, where $x_\R$ is a translation of $\R$ by some length $x_\R$. The map $x\mapsto x_\R$ is a group homomorphism into the abelian group $\R$, so since $w(a, b)$ is in the commutator subgroup of $F_{\{a, b\}}$, we have $w(a,b)_\R = 0$ and $w(a,b)$ leaves $C\times\{0\}$ invariant. Since $w\ne 1$ is semisimple, it has an axis $\Delta$ in $C$ on which it translates by some number $\|w(a,b)\| > 0$. Therefore, $w(a, b)t$ has an axis in the flat plane $\Delta\times \R$, on which its translation length is given by the Pythagorean theorem as
\[\|w(a, b)t\| = \sqrt{\|w(a, b)\|^2 + 1} > 1.\]
This contradicts the equality $c^{-1}tc = w(a, b)t$, for conjugate elements must have same translation length.

\end{proof}

\section{Examples with $k>0$} ~
\label{NoF2}

We now turn our attention to cases 2. in theorems \ref{NonCAT0} and \ref{YesCAT0}.

\begin{thm}
\label{CAT0ex}
    Let $k > 0$ and $w\in F_{\{a, b\}}$ be a cyclically reduced $b$-balanced word not in $\langle a\rangle$. Let $\phi$ be the automorphism of $F_3=F_{\{a,b,c\}}$ given by $\phi(a)=a$, $\phi(b)=ba^k$, and $\phi(c) = cw(a,b)$. If one of the following conditions holds:
    \begin{enumerate}
        \item We have
        \[\left(\sum_{j=0}^{h(w)} N_{a,j}(w)\right)^2 < k\sum_{j=0}^{h(w)} (1 + 2(j_0-j))N_{a,j}(w),\]
        where $j_0$ is the starting level of $w$;
        \item We have
        \[0 < \sum_{j=0}^{h(w)} (-1)^{j_0-j}N_{a,j}(w) < k\]
    \end{enumerate}
    Then $F_3\rtimes_\phi\Z$ is CAT(0).
\end{thm}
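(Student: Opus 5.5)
The plan is to run the same two-step tree-of-spaces argument as in Proposition \ref{InvF2}, now on $X_k(\theta, D, \eps)$. The goal is parameters with $\|wt\| = \|t\| = 1$ in $X_k(\theta, D, \eps)$, which holds by construction for $\|t\|$. Granted that, $F_3\rtimes_\phi\Z$ is the HNN extension of the CAT(0) group $F_2\rtimes_{\phi_k}\Z$ along $c^{-1}tc = wt$. Gluing a strip $\R\times[0,1]$ along an axis of $t$ and an axis of $wt$ then gives a geometric action on a CAT(0) space. I would get the equality by an intermediate value argument in $\eps$, using Lemma \ref{Cont}, with $\theta$ and $D \ge 2N$ fixed.

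\emph{Small $\eps$.} Lemma \ref{FlatkPos} gives $d(wt\tilde x_0, \tilde x_1) \le N\eps$ with $x_1 = \sum_j N_{a,j}(w)\rho^{j_0-j}\vec a$. Hence $\|wt\| \le d(\tilde x_0, wt\tilde x_0) \le d(x_0,x_1) + N\eps$. So it suffices to choose $\theta$ with $d(x_0, x_1) < 1$, i.e. $|x_1|^2 + 2(x_1)_1 < 0$ where $(x_1)_1$ is the first coordinate, and then take $\eps$ small. Here $\vec a = \frac1k(\cos\theta - 1, \sin\theta)$.

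Condition 1 handles $\theta \to 0$. Write $m = j_0 - j$ and $S = \sum_j N_{a,j}(w)$. To second order, $\rho^{m}\vec a \approx \frac1k\left(-\theta^2/2 - m\theta^2,\ \theta\right)$. Summing gives
\[
x_1 \approx \frac1k\Bigl(-\tfrac{\theta^2}{2}\sum_j (1+2(j_0-j))N_{a,j}(w),\ \theta S\Bigr),
\]
so
\[
|x_1|^2 + 2(x_1)_1 = \frac{\theta^2}{k^2}\Bigl(S^2 - k\sum_j(1+2(j_0-j))N_{a,j}(w)\Bigr) + O(\theta^3).
\]
This is negative for small $\theta$ exactly under the inequality in condition 1. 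The main care needed is controlling the $O(\theta^3)$ error uniformly, which is routine since the sum is finite.

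Condition 2 handles $\theta \to \pi$. There $\rho^m\vec a \to (-1)^m(-2/k, 0)$, so $x_1 \to (-2T/k, 0)$ with $T = \sum_j(-1)^{j_0-j}N_{a,j}(w)$. The condition $0 < T < k$ says precisely that this limit lies in the open unit disk around $(-1,0)$. Since that disk is open, the same holds for $\theta$ close to $\pi$.

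\emph{Large $\eps$.} I need $\|wt\| > 1$ for $\eps$ large, which I expect to be the main obstacle. Because $w \notin \langle a\rangle$, its reduced form in the HNN splitting $\langle a,t\rangle *_{b^{-1}tb = ta^k}$ shows $wt \notin \langle a, t\rangle$. I would argue that $wt$ acts hyperbolically on the Bass–Serre tree $T$; the point to check is that it is not elliptic, i.e. not conjugate into $\langle a,t\rangle$. I would try to deduce this from cyclic reducedness of $w$ together with a Britton-type argument, or bypass it via the relation $c^{-1}tc = wt$ in the bigger group? That relation is not yet available in $F_2\rtimes_{\phi_k}\Z$, so the Britton argument it must be. Granted hyperbolicity, any path from $y$ to $wty$ crosses at least one edge space, each of thickness $\eps$. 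The projection $X_k \to T$ can be made $1/\eps$-Lipschitz on edge spaces (it collapses vertex spaces), so $\|wt\| \ge \eps$, which exceeds $1$ for $\eps > 1$. Continuity (Lemma \ref{Cont}) then yields $\eps$ with $\|wt\| = 1$, and the second gluing finishes the proof.
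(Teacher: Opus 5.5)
Your small-$\eps$ analysis matches the paper's: Lemma \ref{FlatkPos}, then the expansions of $|1+x_1|^2$ as $\theta\to0$ and $\theta\to\pi$, which you carry out correctly. Your overall intermediate value strategy via Lemma \ref{Cont} is also the paper's. The genuine gap is exactly the step you flagged. The element $wt$ need not act hyperbolically on the Bass--Serre tree, and no Britton-type argument based on cyclic reducedness of $w$ can prove that it does. Cyclic reducedness of $w$ in $F_2$ does not make $wt$ cyclically reduced for the HNN structure. If $w=a^{m_0}b^{-1}\cdots b\,a^{m_n}$ with $m_0+m_n=k$, the cyclic junction $b\,(a^{m_n}ta^{m_0})\,b^{-1}=b(ta^k)b^{-1}=t$ is a pinch. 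Concretely, take $k=3$ and $w=b^{-1}aba^3$. Then $b(wt)b^{-1}=a\,b(ta^3)b^{-1}=at$. So in every $X_3(\theta,D,\eps)$ we get $\|wt\|=\|at\|=\frac13\sqrt{5+4\cos\theta}<1$, independently of $\eps$. The value $1$ is never reached, and the second gluing cannot be performed.

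This gap cannot be closed as the statement stands, because the statement is false for this $w$. Here $h(w)=1$, $j_0=1$, $N_{a,0}(w)=1$ and $N_{a,1}(w)=3$. So condition 1 holds ($16<18$) and condition 2 holds ($0<2<3$). Yet $t$, $at=(cb^{-1})^{-1}t(cb^{-1})$ and $ta^3=b^{-1}tb$ are three pairwise conjugate elements of $\langle a,t\rangle\cong\Z^2$. Gersten's flat torus argument then shows that $F_3\rtimes_\phi\Z$ does not even act properly by semisimple isometries on a CAT(0) space.

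The paper's own proof contains the same unjustified claim, namely that $b$ occurring in the cyclically reduced word $w$ forces $\|wt\|\to+\infty$ as $\eps\to+\infty$. What is missing is an extra hypothesis, for instance that $wt$ is not conjugate into $\langle a,t\rangle$ in $F_2\rtimes_{\phi_k}\Z$. Under that hypothesis your $\frac{1}{\eps}$-Lipschitz projection onto the tree gives $\|wt\|\ge\eps$, and the rest of your argument goes through. Conversely, suppose condition 1 or 2 holds but $wt$ is elliptic. Then $wt$ is conjugate to some $a^mt$, and $\|a^mt\|<1$ for suitable parameters forces $0<m<k$. The same Gersten obstruction then rules out CAT(0).
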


\begin{proof} ~

Lemma \ref{FlatkPos} proves that if there exists $\theta\in (0, \pi)$ such that $\sum_{j=0}^{h(w)}N_{a,j}(w)\rho^{j_0-j}\vec{a}$ is in the open euclidean disk of radius $1$, then for $D \ge 2N$ (where $N$ is the length of $w$) and $\eps > 0$ small enough, we have $\|wt\| < 1$, and lemma \ref{Cont} proves that continuously increasing $\eps$ allows to find values of the parameters for which $\|wt\| = 1 = \|t\|$ (here we are using that $b$ appears in the cyclically reduced word $w$ to ensure that $\|wt\|\longrightarrow +\infty$ as $\eps\longrightarrow +\infty$), so that the tree of spaces construction works a second time (with strips as edge spaces) to produce a geometric action of $F_3\rtimes_\phi\Z$ on a CAT(0) space. Thus, it suffices to show that either condition 1 or 2 imply the existence of such a parameter $\theta\in (0, \pi)$.


We write coordinates in $\R^2$ with complex numbers for ease of computation. We thus have $\vec{a} = \frac{2}{k}\sin(\frac{\theta}{2})e^{i\frac{\pi+\theta}{2}}$, and $\rho$ acts as multiplication by $e^{i\theta}$, and we want to find $\theta\in (0, \pi)$ such that, for some $0\le j_0\le h(w)$,
\[\delta(\theta) = \left|1 + \sum_{j=0}^{h(w)} N_{a,j}(w)ie^{i(j_0-j + \frac{1}{2})\theta}\frac{2}{k}\sin\left(\frac{\theta}{2}\right)\right|^2 < 1.\]

We will find conditions 1 and 2 using series expansions near $\theta=0$ and $\theta=\pi$, respectively. For $\theta$ near $0$, we find
\[1 + \sum_{j=0}^{h(w)} N_{a,j}(w)ie^{(j_0-j+\frac{1}{2})\theta}\frac{2}{k}\sin\left(\frac{\theta}{2}\right) = 1 + \frac{i}{k}\sum_{j=0}^{h(w)}N_{a,j}(w) \theta - \sum_{j=0}^{h(w)}\frac{j_0-j+\frac{1}{2}}{k}N_{a,j}(w)\theta^2 + O(\theta^3),\]
then
\[\delta(\theta) = 1 + \left(\frac{1}{k^2}\left(\sum_{j=0}^{h(w)} N_{a,j}(w)\right)^2 - \sum_{j=0}^{h(w)}\frac{2(j_0-j)+1}{k}N_{a,j}(w)\right)\theta^2 + O(\theta^3).\]
For $\theta$ close to $0$, this is less than $1$ provided that the quadratic term is negative, which gives condition 1.

For $\theta$ near $\pi$, we get
\[1 + \sum_{j=0}^{h(w)}N_{a,j}(w)ie^{i(j_0-j+\frac{1}{2})\theta}\frac{2}{k}\sin\left(\frac{\theta}{2}\right) \underset{\theta\longrightarrow \pi}{\longrightarrow} 1 - \frac{2}{k}\sum_{j=0}^{h(w)}(-1)^{j_0-j}N_{a,j}(w),\]
and the limit of $\delta(\theta)$ as $\theta\longrightarrow \pi$ is less than $1$ if $0 < \frac{2}{k}\sum_{j=0}^{h(w)}(-1)^{j_0-j}N_{a,j}(w) < 2$. 
This gives condition 2.

\end{proof}

\begin{req}
    In the special case where $w\in [F_2, F_2]$, i.e. $N_a(w) = \sum_{j=0}^{h(w)} N_{a,j}(w) = 0$, the first condition becomes $\sum_{j=0}^{h_(w)}jN_{a,j}(w) < 0$. This can clearly be achieved, for example with $w = [a, b]$, so we do get new examples indeed. Cases where both conditions fail include the cases where all $N_{a,j}$'s are zero.
\end{req}

Let $w(a, b)\in F_2$ be cyclically reduced and $b$-balanced, and let $0\le j\le h(w)$. We say that the $j$-th level of $w$ is \emph{balanced} if $N_j = 0$. We say that $w$ is \emph{totally balanced} if all its levels are balanced.

\begin{exm}
\label{nonCAT0ex}
    Let $k > 0$ and $w\in F_{\{a, b\}}$ be cyclically reduced, $b$-balanced, totally balanced and such that $h(w)=1$. Let $\phi\in \Aut(F_{\{a, b, c\}})$ be given by $\phi(a)=a$, $\phi(b)=ba^k$, $\phi(c) = cw(a,b)$. Then $F_3\rtimes_\phi\Z$ cannot act properly by semisimple isometries on a CAT(0) space.
\end{exm}

\begin{proof} ~

We will find a translation length conflict in the minimal set of the automorphism letter $t$, as in the proof of proposition \ref{InvF2}. The elements $\alpha = a$ and $\beta = bab^{-1}$ are fixed by $\phi$ and generate a free subgroup of rank $2$ in $F_2$, and the assumptions imply that $w$ (or a cyclic permutation of it) is in the commutator subgroup of that subgroup. Therefore, $F_3\rtimes_\phi\Z$ contains a subgroup of the type described in proposition \ref{InvF2} and hence cannot act properly by semisimple isometries on a CAT(0) space.

\end{proof}

\begin{req}
    All the non-CAT(0) examples provided so far have linearly-growing $\phi$. This leaves question 4.2 in Samuelson's paper open.
\end{req}

\printbibliography

\end{document}